\RequirePackage{etex}
\RequirePackage{easymat}

\documentclass[12pt]{elsarticle}
\usepackage{amsthm,amsmath,amssymb,mathrsfs}
\usepackage[all]{xy}

\usepackage[T1]{fontenc}

\usepackage[active]{srcltx}
\sloppy

\numberwithin{equation}{section}

\renewcommand{\le}{\leqslant}
\renewcommand{\ge}{\geqslant}

\newcommand{\mat}[1]{\begin{bmatrix}
   #1\end{bmatrix}}

\newcommand{\ff}{\mathbb F}
\newcommand{\cc}{\mathbb C}
\newcommand{\mc}{\mathcal}
\newcommand{\lin}{\: \frac{\quad\ }{}\:}
\newcommand{\lto}{\longrightarrow}

\newcommand{\ito}{\xrightarrow{\text{\raisebox{-3pt}{$\sim$}}}}

\newcommand{\un}{\underline}

 \newtheorem{thm}{Theorem}[section]
 \newtheorem{lem}[thm]{Lemma}
 \theoremstyle{definition}
 \newtheorem{defn}[thm]{Definition}

\begin{document}

\title{Lipschitz property for systems of linear mappings and bilinear forms}

\author[al]{Abdullah Alazemi} \address[al]{Department of Mathematics, Kuwait University, Safat 13060, Kuwait}
\ead{alazmi@sci.kuniv.edu.kw}

\author[al]{Milica An\dj eli\'{c}\corref{cor}} \ead{andjelic.milica@gmail.com}

\author[fo,fo1]{Carlos M. da Fonseca}
\address[fo]{Kuwait College of Science and Technology, Safat 13133, Kuwait}
\address[fo1]{University of Primorska, FAMNIT, Glagoljsa\v ska 8, 6000 Koper, Slovenia}
\ead{c.dafonseca@kcst.edu.kw, carlos.dafonseca@famnit.upr.si}

\author[ser]{Vladimir~V.~Sergeichuk}
\ead{sergeich@imath.kiev.ua}
\address[ser]{Institute of Mathematics, Tereshchenkivska 3,
Kiev, Ukraine}

\cortext[cor]{Corresponding author}

\begin{abstract}
Let $G$ be a graph with undirected and directed edges.
Its representation  is given by
assigning a vector space to each vertex, a bilinear form on the corresponding vector spaces to each directed edge, and a linear map to each directed edge.
Two representations $\mc A$ and $\mc A'$ of $G$ are called isomorphic if there is a system of linear bijections between the vector spaces corresponding to the same vertices that transforms $\mc A$ to $\mc A'$.  We prove that  if two representations are isomorphic and close to each other, then their isomorphism can be chosen close to the identity.
\end{abstract}

\begin{keyword}
15A21; 15A63; 16G20

\MSC
Systems of operators and forms; tensors of order two; Lipschitz property; representations of quivers and graphs
\end{keyword}

\maketitle

\section{Introduction}

If two square complex matrices are similar and close to each other, then they are similar via a matrix that can be chosen close to the identity matrix.
This fundamental fact is known as the \emph{local Lipschitz property for similarity} (Gohberg and Rodman \cite{g-r}); many mathematical constructions are based on it.

For example, the \emph{affine tangent space}  at the point $A\in\mathbb C^{n\times n}$ to the orbit of $A$ under similarity is the set $\{A+ XA-
AX\,|\,X\in{\mathbb C}^{n\times n}\}$ since by the Lipschitz property each matrix that is similar to $A$ and close to $A$ has the following form with a small $E$:
\begin{align*}
(I-E)^{-1}&A(I-E)=(I+E+E^2+\cdots)A(I-E)\\
&= A+(EA-AE)+E(EA-AE)+E^2(EA-AE)+\cdots
\\ &=
A+\underbrace{EA-AE}
_{\text{small}}
+\underbrace{E(I-E)^{-1}(EA-AE)}
_{\text{very small}}.
\end{align*}

Pierce and Rodman proved analogous local Lipschitz properties for congruence \cite{p-r3}, for simultaneous  congruence  and  simultaneous  unitary
congruence \cite{p-r2}, for matrix  group  actions  that  contain
simultaneous  similarity  and  simultaneous  equivalence \cite{p-r}, and joint similarity and congruence-like actions \cite{rod};  see also \cite{p-r1,rod1}.

The following theorem is a special case of \cite[Corollary 1.3(1)]{rod}.

\begin{thm}
\label{the1}
For each sequence $A_1,\dots,A_t$ of $n\times n$ complex matrices and each $r\in\{0,1,\dots,t\}$, there exist positive real numbers $\varepsilon$ and $K$ with the following property. Let $B_1,\dots,B_t$ be a sequence of $n\times n$ complex matrices such that $\sum_i\|B_i-A_i\|<\varepsilon $ and
\begin{equation}\label{afa}
\begin{split}
S^{-1}A_1S=B_1,\ \dots,\
S^{-1}A_rS=B_r,\\
S^TA_{r+1}S=B_{r+1},\ \dots,\
S^TA_tS=B_t
\end{split}
\end{equation}
for some nonsingular $S\in\cc^{n\times n}$. Then the equalities \eqref{afa} also hold
for some nonsingular $S$ satisfying $\|S-I\|\le K\sum_i\|B_i-A_i\|$.
\end{thm}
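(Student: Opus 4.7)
The plan is to apply the analytic inverse function theorem to a suitable restriction of the orbit map
\[
\Phi(S):=(S^{-1}A_1S,\dots,S^{-1}A_rS,\,S^TA_{r+1}S,\dots,S^TA_tS),\qquad S\in GL_n(\cc),
\]
so as to parametrize a neighborhood of $(A_1,\dots,A_t)$ in its orbit by a linear complement of the Lie algebra of the stabilizer. I would first expand $\Phi(I+X)$ in $X$ and read off the differential of $\Phi$ at $I$ as the $\cc$-linear map
\[
L(X)=(A_1X-XA_1,\dots,A_rX-XA_r,\,X^TA_{r+1}+A_{r+1}X,\dots,X^TA_t+A_tX),
\]
whose kernel is the tangent space at $I$ to the stabilizer subgroup $H:=\Phi^{-1}(A_1,\dots,A_t)$ of $GL_n(\cc)$.

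Fix a $\cc$-linear complement $V$ of $\ker L$ in $\cc^{n\times n}$ and set $F(X):=\Phi(I+X)$ for $X\in V$. Because $dF_0=L|_V$ is injective, the quantitative analytic inverse function theorem yields positive constants $\delta$ and $K$ such that $F$ is a biholomorphism from $V_\delta:=\{X\in V:\|X\|<\delta\}$ onto a smooth analytic submanifold $\mc M\subset(\cc^{n\times n})^t$ passing through $(A_1,\dots,A_t)$, together with the Lipschitz bound
\[
\|F^{-1}(C)\|\le K\sum_i\|C_i-A_i\|,\qquad C=(C_1,\dots,C_t)\in\mc M.
\]
Since $GL_n(\cc)$ acts algebraically on $(\cc^{n\times n})^t$, the full orbit $\mc O:=\Phi(GL_n(\cc))$ is locally closed and hence a smoothly embedded submanifold of dimension $n^2-\dim H=\dim V$; as $\mc M\subseteq\mc O$ and both share this dimension at $(A_1,\dots,A_t)$, some open set $U\subset(\cc^{n\times n})^t$ satisfies $\mc M=\mc O\cap U$. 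Choosing $\varepsilon>0$ so that $\{C:\sum_i\|C_i-A_i\|<\varepsilon\}\subset U$, every $B=(B_1,\dots,B_t)$ satisfying the hypotheses of the theorem lies in $\mc M$, so $S:=I+F^{-1}(B)$ solves \eqref{afa} while obeying $\|S-I\|\le K\sum_i\|B_i-A_i\|$.

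The main obstacle is the identification $\mc M=\mc O\cap U$: a priori, distant ``sheets'' of the orbit could accumulate at $(A_1,\dots,A_t)$, so that some perturbations $B\in\mc O$ near $(A_1,\dots,A_t)$ need not lie on the analytic chart $\mc M$ and would therefore demand a large $S$. I would exclude this either by invoking the local closedness of algebraic orbits, or, more elementarily, by applying the inverse function theorem a second time to the multiplication map $H\times(I+V)\to GL_n(\cc)$ at $(I,I)$ to verify that $\mc M$ is relatively open in $\mc O$.
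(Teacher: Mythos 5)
Your proposal is correct in outline, but it follows a genuinely different route from the paper, which in fact does not prove Theorem \ref{the1} at all: the statement is quoted from Rodman \cite{rod}, and the paper's own Lipschitz results (Theorems \ref{xgh} and \ref{gjs}, of which Theorem \ref{the1} is the one-vertex special case, with $r$ directed and $t-r$ undirected loops) are obtained by reduction rather than by orbit geometry. There, the system of similarities and congruences is encoded into a single pair of matrices under simultaneous similarity (via the construction \eqref{nji} and Lemmas \ref{njf} and \ref{jyr}), and the Pierce--Rodman Lipschitz property for such pairs \cite{p-r} is invoked as a black box. Your direct approach---linearize the orbit map at $I$, split off $\ker L=\operatorname{Lie}(H)$, and invert the restriction of $\Phi$ to $I+V$---is the mechanism underlying that black box, so carried out in full it reproves it rather than uses it. What it buys is independence from \cite{p-r}; what it costs is reliance on nontrivial facts about algebraic actions of $GL_n(\cc)$: Chevalley's theorem that orbits are locally closed, smoothness of the orbit and the dimension formula $\dim\mc O=\dim GL_n(\cc)-\dim H$ in characteristic $0$, and the agreement of the subspace topology on $\mc O$ with its manifold topology, which is what lets invariance of domain turn the equidimensional injective immersion $F$ into an open map into $\mc O$.

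One caveat: of your two ways of closing the ``distant sheets'' gap, only the first is sound. The second---applying the inverse function theorem to $H\times(I+V)\to GL_n(\cc)$ at $(I,I)$---shows only that the saturation $H\cdot(I+V_\delta)=\Phi^{-1}(\mc M)$ is open in $GL_n(\cc)$. To conclude that $\mc M$ is relatively open in $\mc O$ you would additionally need $\Phi$ to be an open map onto $\mc O$ in the subspace topology, which is exactly the point at issue: a tuple $B=\Phi(S)$ close to $A$ could a priori arise only from matrices $S$ lying outside $H\cdot(I+V_\delta)$. (For non-algebraic Lie group actions this genuinely fails, e.g.\ for the irrational winding of $\mathbb R$ on a torus.) So the ``more elementary'' substitute is circular, and the appeal to local closedness of algebraic orbits cannot be dispensed with.
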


Here $\|\cdot\|$ is any matrix norm, although
Rodman \cite{rod} uses the
operator norm $\|A\|:= \max_{|x|=1} |Ax|$, in which $|\cdot|$ stands for the Euclidean norm of vectors.

The goal of this paper is to give an independent proof of a more general statement about representations of bidirected graphs (Theorem \ref{gjs}) using the Lipschitz property for matrix pairs with respect to similarity and two known facts about systems of linear mappings and bilinear forms.

\section{Representations of bidirected graphs}\label{ss2}

We consider systems  of linear mappings and bilinear forms  as representations of \emph{bidirected graphs}, which are the graphs with undirected, directed, and bidirected edges; for example,
\begin{equation}\label{ksx}
\begin{split}
{\xymatrix@=2pc{
 &{1}&\\
 {2}\ar@(ul,dl)@{-}_{\gamma }
 \ar@{-}[ur]^{\alpha}
  \ar@/^/@{->}[rr]^{\delta }
 \ar@/_/@{<->}[rr]_{\varepsilon} &&{3}
 \ar@{->}[ul]_{\beta}
 \ar@(ur,dr)@{<->}^{\zeta}
 }}\end{split}
\end{equation}
The vertices of bidirected graphs that we consider are natural numbers $1,2,\dots,t$ ($t\ge 1$). Multiple edges and loops are allowed.

\begin{defn}[\cite{ser_izv}]
\label{qah}
Let $G$ be a bidirected graph.
\begin{itemize}
  \item
A \emph{representation} $\mc A$ of $G$ over a field $\ff$ is given by
assigning
\begin{itemize}
  \item a finite dimensional
vector space $\mc A_i$ over $\ff$ to each vertex $i$,

  \item
      a bilinear form $\mc A_{\alpha}:\mc A_j\times \mc A_i\to\ff$ to each undirected edge $\alpha:i\lin j$ with $i\le j$,

  \item  a linear mapping $\mc A_{\beta}:\mc A_i\to \mc A_j$ to each directed edge $\beta:i\lto j$,

  \item
      a bilinear form  on the dual spaces $\mc A_{\gamma}:\mc A_j^*\times \mc A_i^*$ to each bidirected edge $\gamma:i\longleftrightarrow j$ with $i\le j$ ($V^*$ denotes the dual space of all linear forms $V\to \ff$).

\end{itemize}

  \item
The \emph{dimension} of a representation $\mc A$ is the vector $\dim\mc A:=(\dim \mc A_1,\dots,\dim \mc A_t)$.

  \item
An \emph{isomorphism} $\varphi :\mc A\ito\mc B$ of two representations $\mc A$ and $\mc B$ of $G$ of the same dimension is a system $\varphi =(\varphi _1,\dots,\varphi _t)$ of linear bijections $\varphi_i:\mc A_i\to\mc B_i$ that transforms $\mc A$ to $\mc B$; that is,
$
\mc A_{\alpha }(x,y)=\mc B_{\alpha }(\varphi_j x,\varphi_i y)$ for each $\alpha :i\lin j$ ($i\le j$), $\varphi_j\mc A_{\beta}=\mc B_{\beta}\varphi_i$  for each  $\beta:i\lto j$, and
$\mc A_{\gamma}(\varphi_j^*x,\varphi_i^*y)
=\mc B_{\gamma}(x,y)$
for each  $\gamma:i\longleftrightarrow j$ ($i\le j)$.

  \item
A representation $\mc A$ of $G$ over a field $\ff$ in which all vector spaces $\mc A_i$ are of the form $\ff\oplus\dots\oplus \ff$ is called a
\emph{matrix representation}. (All forms and linear mappings of a matrix representation are given by matrices.)
\end{itemize}
\end{defn}

For example, a representation of \eqref{ksx}
is a system
\begin{equation}\label{ksev}
\begin{split}
{\raisebox{-20pt}{\text{$\mc A$}:}\quad
\xymatrix@=2pc{
 &{\mc A_1}&\\
  \save
!<-2mm,0cm>
 {\mc A_2}\ar@(ul,dl)@{-}_{\mc A_{\gamma} }
 \restore
 \ar@{-}[ur]^{\mc A_{\alpha}}
  \ar@/^/@{->}[rr]^{\mc A_{\delta }}
 \ar@/_/@{<->}[rr]_{\mc A_{\varepsilon}} &&{\mc A_3}
 \ar@{->}[ul]_{\mc A_{\beta}}
   \save
!<2mm,0cm>
 \ar@(ur,dr)@{<->}^{\mc A_{\zeta}}\restore
 }}\end{split}
\end{equation}
consisting of vector spaces $ \mc A_1,\, \mc A_2,\, \mc A_3$,  bilinear forms $\mc A_{\alpha }: \mc A_2\times \mc A_1\to \ff$ and $\mc A_{\gamma}: \mc A_2\times \mc A_2\to \ff$,
linear mappings $\mc A_{\beta}:\mc A_3\to \mc A_1$ and
 $\mc A_{\delta }:\mc A_2\to \mc A_3$,
 and
 bilinear forms on the dual spaces
 $\mc A_{\varepsilon}: \mc A_3^*\times \mc A_2^*\to \ff$ and $\mc A_{\zeta}: \mc A_3^*\times \mc A_3^*\to \ff$.

 The  forms and mappings in \eqref{ksev} can be considered as elements of tensor products $\mc A_{\alpha }\in \mc A_2^*\otimes \mc A_1^*$, $\mc A_{\gamma}\in \mc A_2^*\times \mc A_2^*$,
 $\mc A_{\beta}\in \mc A_3^*\otimes \mc A_1$,
 $\mc A_{\delta }\in\mc A_2^*\otimes\mc A_3$, and $\mc A_{\varepsilon}\in \mc A_3\otimes \mc A_2$, $\mc A_{\zeta}\in \mc A_3\otimes \mc A_3$; that is, as tensors of types $(0,2)$, $(1,1)$, and $(2,0)$.
Therefore, each representation of a bidirected graph is a system of vector spaces and tensors of order 2.

If all edges of $G$ are directed, then $G$ is a quiver and its representations are quiver representations. If all edges are directed and undirected, then $G$ is a mixed graph; its representations are considered in \cite{hor-ser_mixed}.

Each representation of a bidirected graph $G$ is isomorphic to a matrix representation; therefore, it is enough to study only matrix representations; they are given by matrices assigned to all edges.  The \emph{norm} $\|\mc A\|$ of a matrix representation $\mc A$ is the sum of norms of its matrices. For definiteness, we use the Frobenius norm
\begin{equation}\label{oim}
\|A\|:=\sqrt{\sum\nolimits_{i,j}|a_{ij}|^2}
\end{equation}
of a complex matrix $A=[a_{ij}]$,
although we could use an arbitrary matrix norm.

The main result of the paper is the following theorem, which is the \emph{local Lipschitz property for representations of bidirected graphs.}

\begin{thm}\label{gjs}
Let $\mc A$ be a complex matrix representation of dimension $(n_1,\dots,n_t)$ of a bidirected graph $G$.  There exist positive real numbers
$\varepsilon$ and $K$ with the following property: for every complex matrix representation $\mc B$ that is isomorphic to $\mc A$ and satisfies
$
\|\mc B -\mc A\| < \varepsilon$
there exists an isomorphism
$\varphi =(S_1,\dots,S_t):\mc B\ito\mc A$ such that
\[
\|S_1-I_{n_1}\|+\dots+\|S_t-I_{n_t}\| \le K\|\mc B - \mc A\|.
\]
\end{thm}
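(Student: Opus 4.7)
My plan is to prove Theorem~\ref{gjs} by successively reducing the general bidirected-graph setting to the special case of a single pair of $N \times N$ matrices under simultaneous similarity, where the Lipschitz property is the case $r = t = 2$ of Theorem~\ref{the1}. Each reduction must be norm-Lipschitz, so that smallness of perturbation and of isomorphism survive the passage.

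For the first reduction I would use the fact that any matrix representation $\mc A$ of the bidirected graph $G$ can be encoded in a norm-Lipschitz fashion as a matrix representation $\tilde{\mc A}$ of an ordinary quiver $\tilde G$ (directed edges only), preserving isomorphism classes: every undirected edge (a $(0,2)$-tensor) and every bidirected edge (a $(2,0)$-tensor) is replaced by a collection of $(1,1)$-tensors by dualizing one of its endpoints and linking it to a fresh copy of the vertex. After this step the isomorphism condition at every edge takes the form $S_j A_\beta S_i^{-1} = B_\beta$, and in particular only ordinary similarity-type actions remain.

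For the second reduction I would use the fact that simultaneous similarity of a family $\{A_\beta\}$ living at different vertices of a quiver can be encoded as simultaneous similarity of a single pair $(P,Q)$ of large matrices on a common space $\mc W$: $P$ is a structural matrix whose commutant in $GL(\mc W)$ consists exactly of the block-diagonal matrices respecting the vertex decomposition, and $Q$ is a data matrix in which the various $A_\beta$ sit in prescribed off-diagonal blocks. With both reductions in hand, Theorem~\ref{the1} with $r = t = 2$ applied to the pairs $(P(\mc A),Q(\mc A))$ and $(P(\mc B),Q(\mc B))$ returns a similarity $S$ close to the identity; the commutation with $P$ forces $S$ to have the required block-diagonal shape, and its relevant blocks provide the $(S_1,\dots,S_t)$ of the statement. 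Because both reductions are norm-Lipschitz, the bound $\sum_i \|S_i - I_{n_i}\| \le K\|\mc B - \mc A\|$ follows by composing the Lipschitz constants.

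The main obstacle is the \emph{quantitative} character of these two reductions: the usual proofs that bilinear-form problems reduce to quiver problems, and that quiver problems reduce to matrix-pair problems, are typically phrased as equivalences of categories or as equalities of isomorphism classes, and by themselves do not provide uniform Lipschitz constants. The technical work to be done is to give explicit linear-algebra formulas for the two reductions with controlled operator norms, and to verify carefully that the commutant condition with the structural matrix $P$ pins down each individual block $S_i$ of the recovered similarity $S$ with quantitative control in terms of the global $\|S-I\|$.
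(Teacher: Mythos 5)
Your overall architecture coincides with the paper's: reduce representations of the bidirected graph $G$ to representations of a quiver $\un G$ (the Roiter--Sergeichuk doubling of vertices via dual spaces), then reduce quiver representations to a single matrix pair under simultaneous similarity (the Belitskii--Sergeichuk encoding), and finally invoke the Pierce--Rodman Lipschitz property for pairs. The second reduction works exactly as you describe: the commutant of the structural matrix forces the recovered similarity to be block-diagonal, an extra identity block forces the repeated blocks to coincide, and the norm bookkeeping is immediate; this is the paper's Theorem \ref{xgh}.

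The genuine gap is in the first reduction, and it is not the one you flag. The issue is not merely that the encoding $\mc A\mapsto\un{\mc A}$ must be norm-Lipschitz --- it trivially is, with constant $2$, since $\un{\mc A}$ just repeats each matrix and its transpose. The issue is that an isomorphism of the quiver representations $\un{\mc B}\ito\un{\mc A}$ does \emph{not} descend to an isomorphism $\mc B\ito\mc A$. Your ``fresh copy'' $i^*$ of each vertex $i$ carries its own independent change of basis, so the block-diagonal similarity recovered from the matrix-pair step gives a tuple $(\dots,S_i,\dots,S_{i^*},\dots)$ in which $S_{i^*}$ need not equal $S_i^{-T}$; only tuples satisfying that constraint come from isomorphisms of the original bidirected-graph representation. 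Hence your final step ``its relevant blocks provide the $(S_1,\dots,S_t)$ of the statement'' fails for any edge that is not directed. The paper repairs this (Lemma \ref{jyr}) by replacing the arbitrary isomorphism $\psi=(P,Q,R,S)$ with $\psi^{-\circ}f(\psi^{\circ}\psi)$, where $f$ is a polynomial whose square at the nonsingular matrix $P^TR\oplus Q^TS$ returns that matrix (Kaplansky's polynomial square root); the corrected tuple does satisfy the constraint. The proof of Theorem \ref{gjs} must then additionally show that this correction stays close to the identity when $\psi$ does, via the explicit estimates $\|f(R^TP)-I\|\le\|R^TP-I\|\le 3\delta$ and $\|R^{-T}f(R^TP)-I\|\le 11\delta$. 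This square-root correction and its quantitative control are the heart of the argument, and your proposal contains no substitute for them.
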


We show in Section \ref{mkm} that Theorem \ref{gjs}
follows easily from the Lipschitz property for complex matrix pairs with respect to similarity (which is proved in \cite{p-r}) and the following known facts:
\begin{itemize}
  \item the problem of classifying complex matrix pairs with respect to similarity transformations
\begin{equation*}\label{asq}
(M,N)\mapsto(R^{-1}MR,R^{-1}NR),\qquad R\text{ is nonsingular,}
\end{equation*}
       contains the problem of classifying complex representations of an arbitrary quiver (which is proved in \cite{bel-ser_compl,gel-pon}), and
  \item the problem of classifying complex representations of a bidirected graph $G$ is reduced to the problem of classifying complex representations of some quiver $\underline G$ (which is proved in \cite{ser_izv}).
\end{itemize}

\section{Proof of Theorem {\ref{gjs}}}\label{mkm}

We first prove the Lipschitz
property for quiver representations and then extend it to representations of
bidirected graphs.

\subsection{From matrix pairs to quiver representations}\label{ggg}
Let us prove the following theorem, which is the \emph{global Lipschitz property for representations of quivers.}
\begin{thm}\label{xgh}
Let $\mc A$ be a complex matrix representation of dimension $(n_1,\dots,n_t)$ of a quiver $Q$.  There exists a positive real number $K$ such that for every complex matrix representation $\mc B$ that is isomorphic to $\mc A$
there exists an isomorphism  $\varphi =(S_1,\dots,S_t):\mc B\ito\mc A$ satisfying
\[
\|S_1-I_{n_1}\|+\dots+\|S_t-I_{n_t}\| \le K\|\mc B - \mc A\|.
\]
\end{thm}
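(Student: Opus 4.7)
The plan is to reduce Theorem \ref{xgh} to the Lipschitz property for pairs of matrices under simultaneous similarity (Theorem \ref{the1} with $t=r=2$, invoked from \cite{p-r}) via the encoding of quiver representations proved in \cite{bel-ser_compl,gel-pon}. Fix distinct scalars $\lambda_1,\dots,\lambda_t\in\cc$ and set $D:=\operatorname{diag}(\lambda_1 I_{n_1},\dots,\lambda_t I_{n_t})$ of size $N:=n_1+\dots+n_t$; note that $D$ depends only on the dimension vector, not on the representation. To a matrix representation $\mc A$ of $Q$ I would assign an $N\times N$ matrix $M_{\mc A}$ whose $(j,i)$ block stores the arrow matrix $\mc A_\alpha$ for $\alpha\colon i\to j$, using a slightly enlarged ambient space so that each parallel arrow or loop receives its own private block and no collisions occur. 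The assignment $\mc A\mapsto M_{\mc A}$ is then linear, so $\|M_{\mc A}-M_{\mc B}\|$ and $\|\mc A-\mc B\|$ agree up to a fixed multiplicative constant.

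I would then verify that $\mc A\cong\mc B$ as representations of $Q$ if and only if there is a nonsingular $S\in\cc^{N\times N}$ with $S^{-1}M_{\mc A}S=M_{\mc B}$ and $S^{-1}DS=D$. The second equation forces $S$ to centralize $D$; since the $\lambda_i$ are distinct, this is exactly the block-diagonal condition $S=\operatorname{diag}(S_1,\dots,S_t)$ with $S_i\in GL_{n_i}(\cc)$. Reading $S^{-1}M_{\mc A}S=M_{\mc B}$ block by block then yields $S_j^{-1}\mc A_\alpha S_i=\mc B_\alpha$ for every arrow $\alpha\colon i\to j$, recovering the quiver isomorphism. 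Now I would apply the matrix-pair Lipschitz property to $(M_{\mc A},D)$ with perturbation $(M_{\mc B},D)$; since only the first coordinate is perturbed, the bound reads $\|S-I_N\|\le K_0\|M_{\mc A}-M_{\mc B}\|$. The Frobenius identity $\|S-I_N\|^2=\sum_i\|S_i-I_{n_i}\|^2$ and Cauchy-Schwarz then give $\sum_i\|S_i-I_{n_i}\|\le\sqrt{t}\,\|S-I_N\|\le K\|\mc B-\mc A\|$ for a suitable constant $K$ depending only on $\mc A$, $Q$, and $t$.

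The bulk of the work is setting up the reduction so that the correspondence $\mc A\leftrightarrow(M_{\mc A},D)$ is faithful and linear; once that is done, all analytic content is inherited from the matrix-pair Lipschitz property. The subtlest point is that Theorem \ref{the1} as quoted is local (it requires $\|\mc B-\mc A\|<\varepsilon$), whereas Theorem \ref{xgh} asks for a global bound. The resolution is to invoke the matrix-pair Lipschitz in its global form from \cite{p-r}, in which the constant applies to every $\mc B$ in the simultaneous similarity orbit of $(M_{\mc A},D)$; the reduction then transports the bound intact, and the global-versus-local distinction stays entirely on the matrix-pair side.
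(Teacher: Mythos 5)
Your overall strategy is the same as the paper's: encode a representation of $Q$ as a pair of matrices, one fixed block-scalar matrix whose centralizer consists exactly of the block-diagonal nonsingular matrices (the paper uses $\operatorname{diag}(I_{n_1},2I_{n_2},\dots)$, you use $\operatorname{diag}(\lambda_1I_{n_1},\dots,\lambda_tI_{n_t})$ with distinct $\lambda_i$ --- the same device), and one matrix carrying the arrow data linearly in $\mc A$; then invoke the \emph{global} Lipschitz property for pairs under simultaneous similarity from \cite{p-r} and transport the bound back. Your handling of the local-versus-global issue and of the norm comparison (the $\sqrt t$ from Cauchy--Schwarz) is correct --- indeed slightly more careful than the paper's own final estimate, which asserts $\sum_i\|S_i-I\|\le\|R-I\|$ for the Frobenius norm, an inequality that goes the wrong way but only costs a harmless constant factor.

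The one genuine gap is in your treatment of parallel arrows and loops. Giving each such arrow ``its own private block'' in an enlarged ambient space means duplicating a vertex, say $j$ into $j$ and $j'$; the enlarged $D$ then acquires an extra scalar block, its centralizer acquires an extra \emph{independent} factor $S_{j'}$, and reading off $S^{-1}M_{\mc A}S=M_{\mc B}$ for an arrow $\alpha\colon i\to j$ stored in the duplicate position gives $S_{j'}^{-1}\mc A_\alpha S_i=\mc B_\alpha$ with $S_{j'}$ a priori unrelated to $S_j$. So for the encoding as you describe it, the implication ``pairs similar $\Rightarrow$ representations isomorphic'' fails: you do not recover a single change of basis per vertex, and the claimed equivalence you say you ``would verify'' is false without a further device. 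The fix --- which is exactly what the paper's matrix $N(\mc A)$ in \eqref{nji} does with the identity in its $(3,4)$ block --- is to insert into $M_{\mc A}$ an identity block coupling positions $j$ and $j'$; preservation of that block under the block-diagonal similarity forces $S_j=S_{j'}$. Since this extra block is the same in $M_{\mc A}$ and $M_{\mc B}$, it does not affect the norm estimate. With that insertion your argument closes and coincides with the paper's proof.
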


Let us consider
complex matrix representations
\begin{equation}\label{kshd}
\begin{split}
\mc A:\hspace{-30pt}
{\xymatrix@R=2pc@C=0pc{
 &{\cc^{n_1}}&\\
  \save
!<-3mm,0cm>
 {\cc^{n_2}}\ar@(ul,dl)@{->}_{C}
 \restore
 \ar@{->}[ur]^{A}
  \ar@/^/@{->}[rr]^{D}
 \ar@/_/@{->}[rr]_{E} &&{\cc^{n_3}}
 \ar@{->}[ul]_{B}
 \ar@{->}@(u,ur)^{F}
 \ar@{<-}@(dr,r)_{G}
 }}
\qquad\qquad
\mc B:\hspace{-30pt}
{\xymatrix@R=2pc@C=0pc{
 &{\cc^{n_1}}&\\
  \save
!<-3mm,0cm>
 {\cc^{n_2}}\ar@(ul,dl)@{->}_{C'}
 \restore
 \ar@{->}[ur]^{A'}
  \ar@/^/@{->}[rr]^{D'}
 \ar@/_/@{->}[rr]_{E'} &&{\cc^{n_3}}
 \ar@{->}[ul]_{B'}
 \ar@{->}@(u,ur)^{F'}
 \ar@{<-}@(dr,r)_{G'}
 }
}\end{split}
\end{equation}
(where $\cc^n:=\cc\oplus\dots\oplus\cc$ with $n$ summands)
of the
quiver
\begin{equation*}\label{kswd}
\begin{split}
{\raisebox{-20pt}{\text{$Q$}:}\quad}
{\xymatrix@R=2pc@C=1pc{
 &{1}&\\
 {2}\ar@(ul,dl)@{->}_{\gamma }
 \ar@{->}[ur]^{\alpha}
  \ar@/^/@{->}[rr]^{\delta }
 \ar@/_/@{->}[rr]_{\varepsilon} &&{3}
 \ar@{->}[ul]_{\beta}
 \ar@{->}@(u,ur)^{\zeta}
 \ar@{<-}@(dr,r)_{\eta}
 }}\end{split}
\end{equation*}

Define the matrix pair
\begin{equation}\label{nji}
(M,N(\mc A)):=
\left(
\mat{I_{n_1}\!&0&0&0\\0&\!\!2I_{n_2}\!\!&0&0\\
0&0&\!\!3I_{n_3}\!\!&0\\0&0&0&\!\!4I_{n_3}},
\mat{A&B&0&0\\C&0&0&0\\
0&D&F&I\\0&E&G&0}
\right)
\end{equation}
by the representation $\mc A$.

\begin{lem}[{\cite[Theorem 2.1]{bel-ser_compl}}]
\label{njf}
The matrix representations $\mc A$ and $\mc B$ in \eqref{kshd} are isomorphic if and only if the  pairs $(M,N(\mc A))$ and $(M,N(\mc B))$ are similar.
\end{lem}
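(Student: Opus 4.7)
The plan is a direct block-matrix computation that exploits the rigidity of $M$.

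Because $M=\mathrm{diag}(I_{n_1},2I_{n_2},3I_{n_3},4I_{n_3})$ has four distinct scalar blocks, its eigenspaces coincide with the four coordinate subspaces of dimensions $n_1,n_2,n_3,n_3$. Consequently, a nonsingular $R$ satisfies $R^{-1}MR=M$ if and only if it is block-diagonal
\[
R=\mathrm{diag}(R_1,R_2,R_3,R_4),\qquad R_i\in\cc^{n_i\times n_i}\text{ nonsingular},
\]
where $n_4:=n_3$. This structural observation is the heart of the argument.

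For the ``only if'' direction, given an isomorphism $\varphi=(S_1,S_2,S_3):\mc A\ito\mc B$, take $R:=\mathrm{diag}(S_1,S_2,S_3,S_3)$, with $S_3$ repeated in the fourth block. Then $R$ commutes with $M$ by construction, and conjugation by $R$ acts on each block of $N(\mc A)$ by the rule $N(\mc A)_{ij}\mapsto R_i^{-1}N(\mc A)_{ij}R_j$. For each of the seven nonzero blocks carrying a matrix of the representation, this produces precisely the transformation law defining the isomorphism condition for the corresponding arrow of $Q$; for the identity block at position $(3,4)$ one has $S_3^{-1}IS_3=I$, so it is preserved. Hence $R^{-1}N(\mc A)R=N(\mc B)$.

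Conversely, any similarity $R$ between the two pairs is block-diagonal as above, and the $(3,4)$-entry of the equation $R^{-1}N(\mc A)R=N(\mc B)$ reads $R_3^{-1}R_4=I$, i.e., $R_4=R_3$. The remaining nonzero blocks then translate verbatim into the conditions for $(R_1,R_2,R_3)$ to be an isomorphism $\mc A\ito\mc B$, giving the result. The only nontrivial point---the ``main obstacle,'' such as it is---is recognizing the role of the auxiliary identity block at $(3,4)$: it couples the two $n_3$-copies of the conjugator, and without this coupling the similarity class of $(M,N(\mc A))$ would be strictly coarser than the isomorphism class of $\mc A$. Once this is noted, everything else is a routine block-by-block verification.
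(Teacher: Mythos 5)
Your proposal is correct and follows essentially the same route as the paper's proof: the distinct scalar blocks of $M$ force $R$ to be block-diagonal, the identity block at position $(3,4)$ forces the third and fourth diagonal blocks of $R$ to coincide, and the remaining blocks of $R^{-1}N(\mc A)R=N(\mc B)$ are exactly the isomorphism conditions. No gaps.
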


\begin{proof}
Each matrix representation that is isomorphic to $\mc A$ has the form
\begin{equation}\label{ksd2}
\begin{split}
{\xymatrix@=3pc{
 &{\cc^{n_1}}&\\
   \save
!<-3mm,0cm>
 {\cc^{n_2}}\ar@(ul,dl)
 @{->}_{S_2^{-1}CS_2}
 \restore
 \ar@{->}[ur]^{ S_1^{-1}AS_2}
  \ar@/^/@{->}[rr]^{S_3^{-1}DS_2}
 \ar@/_/@{->}[rr]_{ S_3^{-1}ES_2} &&{\cc^{n_3}}
 \ar@{->}[ul]_{ S_1^{-1}BS_3}
 \ar@{->}@(u,ur)^(.7){S_3^{-1}FS_3}
 \ar@{<-}@(dr,r)_{S_3^{-1}GS_3}
 }}\end{split}
\end{equation}
where $S_1,S_2,S_3$ are nonsingular matrices, which can be considered as the change of basis matrices.

$\Longleftarrow$.
Let
\begin{equation}\label{mkg}
R^{-1}MR=M,\qquad R^{-1}N(\mc A)R
=N(\mc B)
\end{equation}
for some nonsingular $R$. The equality $R^{-1}MR=M$ implies that $R=S_1\oplus S_2\oplus S_3\oplus S_4$. We conclude from  the second equality in \eqref{mkg} that $R$ acts on $N(\mc A)$ by similarity transformations as follows:
\[
\begin{MAT}(b){ccccc}
&\scriptstyle S_1&
\scriptstyle S_2&
\scriptstyle S_3&\scriptstyle S_4
                                 \\
\scriptstyle  S_1^{-1}&0&A&B&0
\\
\scriptstyle  S_2^{-1}
&0&C&0&0
\\
\scriptstyle S_3^{-1}
&0&D&F&I
\\
\scriptstyle S_4^{-1}
&0&E&G&0
\addpath{(1,0,4)uuuu%
rrrrdddd%
llll}
\addpath{(1,2,2)rrrr}
\addpath{(1,1,2)rrrr}
\addpath{(1,3,2)rrrr}
\addpath{(2,0,2)uuuu}
\addpath{(3,0,2)uuuu}
\addpath{(4,0,2)uuuu}
\\
\end{MAT}
\quad
\raisebox{-6pt}{\text{$\mapsto$}}
\quad
\begin{MAT}(b){cccc}
&
&&                                 \\
\rule{0pt}{9pt}0&A'&B'&0
\\
0&C'&0&0
\\
0&D'&F'&I
\\
0&E'&G'&0
\addpath{(0,0,4)uuuu%
rrrrdddd%
llll}
\addpath{(0,2,2)rrrr}
\addpath{(0,1,2)rrrr}
\addpath{(0,3,2)rrrr}
\addpath{(1,0,2)uuuu}
\addpath{(2,0,2)uuuu}
\addpath{(3,0,2)uuuu}
\\
\end{MAT}
\]
Since these transformations preserve the $(3,4)$th block $I$, $S_3=S_4$. We have that $\mc
B$ is of the form \eqref{ksd2}, and so $\mc B$ is isomorphic to $\mc A$.

$\Longrightarrow$.
Let $\mc B$ be isomorphic to $\mc A$. Then $\mc B$ is of the form
\eqref{ksd2}, and so $R^{-1}(M,N(\mc A))R=(M,N(\mc B))$ with $R:=S_1\oplus S_2\oplus S_3\oplus S_3$.
\end{proof}

\begin{proof}[Proof of Theorem \ref{xgh}] For the sake of clarity, we prove Theorem \ref{xgh} for quiver representations \eqref{kshd}; its proof for representations of an arbitrary quiver is analogous.

By \cite[Theorem 3.1]{p-r},
the global Lipschitz property holds for complex matrix pairs with respect to similarity. Applying it to the matrix pair \eqref{nji}, we find that there is a positive real number $K$ with the following property:  for every complex matrix pair $(M',N')$ that is similar to $(M,N(\mc A))$
there exists a nonsingular matrix $R$ satisfying
\begin{gather}\label{cde}
R^{-1}MR=M',\qquad R^{-1}N(\mc A)R
=N',\\ \label{cde1}
\|R-I\|\le K(\|M' - M\|+\|N' - N(\mc A)\|).
\end{gather}

Let a representation $\mc B$ of the form \eqref{kshd} be isomorphic to $\mc A$. By Lemma \ref{njf}, the pairs $(M,N(\mc A))$ and $(M,N(\mc B))$ are similar, which allows us to take $(M',N'):=(M,N(\mc B))$ in
\eqref{cde} and \eqref{cde1}. Since $R^{-1}MR=M$, $R:=S_1\oplus S_2\oplus S_3\oplus S_4$. The equality $R^{-1}N(\mc A)R
=N(\mc B)$ ensures that $S_3=S_4$. Hence
$\mc B$ is the representation \eqref{ksd2} and $\varphi:=(S_1,S_2,S_3):\mc B\ito\mc A$.
By \eqref{oim} and \eqref{cde1},
\begin{align*}
\|\varphi-1\|&=\|S_1-I\| +\|S_2-I\|+\|S_3-I\|\\&\le \|R-I\|
\le K\|N(\mc B) - N(\mc A)\|\\&\le K(\|A'-A\|+\|B'-B\|+\dots+\|G'-G\|)
\\&=K\|\mc B-\mc A\|.\tag*{\qedhere}
\end{align*}
\end{proof}

\subsection{From quiver representations to representations of bidirected graphs}\label{ggg1}

Let us recall the method that reduces the problem of classifying systems of linear mappings and forms to the problem of classifying systems of linear mappings. This method was developed by Roiter and Sergeichuk \cite{roi,ser_izv}; it is used in \cite{hor-ser_bilin,ser_isom}.

For every bidirected graph
${G}$, we denote by
$\underline{G}$ the
quiver obtained from
$G$ by replacing
\begin{itemize}\parskip=-3pt
  \item
each vertex $i$ of
$G$ by the vertices
$i$ and $i^*$,
  \item
each arrow
$\alpha:  i\lto j$
by the arrows
$\alpha: i\lto j$
and $\alpha^*:
j^*\lto i^*$,
  \item
each edge $\beta:
i\lin\, j\ (i\le j)$
by $\beta: j\lto
i^*$ and
$\beta^*: i\lto
j^*$,
  \item
each edge
$\gamma:
i\longleftrightarrow
j$ ($i\le j$)  by
$\gamma: j^*\lto
i$ and $\gamma^*:
i^*\lto j$.
\end{itemize}
We put $i^{**}:=i$ and $\alpha^{**}:=\alpha$ for all vertices and arrows. The mappings $i\mapsto i^*$ and $\alpha\mapsto\alpha^*$ are  involutions on the sets of vertices and arrows of the quiver $\underline{G}$.

For example,
\begin{equation}\label{4.1}
\begin{split}
{\xymatrix@R=4pt{
 &{1}\ar[dd]_{\alpha}
 \ar@{-}@/^/[dd]^{\beta}\\
{G}:\!\!\!\! &
  \\
 &{2}
\save !<1mm,0cm>
\ar@(ur,dr)@{<->}^{\gamma}
\restore}}
\qquad\qquad
{\xymatrix@R=4pt{
 &{1}\ar[dd]_{{\alpha}}
 \ar[ddrr]^(.25){{\beta}}&
 &{1^*} \\
 {\underline{G}:}\!\!&&\\
 &{2}\ar[uurr]^(.75){{\beta}^*}
 &&{2^*}\ar[uu]_{{\alpha}^*}
\ar@<-0.4ex>[ll]_{{\gamma}}
 \ar@<0.4ex>[ll]^{{\gamma}^*}
 }}
\end{split}
\end{equation}

For every complex matrix representation $\mathcal
A$ of a bidirected graph $G$, we define
the complex matrix representation
$\underline {\mathcal A}$ of
$\underline {G}$ with the same $A_{\alpha}$ and with $A_{\alpha^*}:=A_{\alpha}^T$ for each edge $\alpha $ of $G$.
For example,
\begin{equation}\label{4.df}
\begin{split}
{\xymatrix@R=4pt{
 &{1}\ar[dd]_{A}
 \ar@{-}@/^/[dd]^{B}\\
{\mathcal A}:\!\!&
  \\
 &{2}
\save !<0.9mm,0cm>
\ar@(ur,dr)@{<->}^{C}
\restore}}
   \qquad   \qquad
{\xymatrix@R=4pt{
 &{1}\ar[dd]_{A}
 \ar[ddrr]^(.25){B}&
 &{1^*} \\
 {\underline{\mathcal A}:}\!\!&&\\
 &{2}\ar[uurr]^(.75){B^T}
 &&{2^*}\ar[uu]_{A^T}
 \ar@<0.4ex>[ll]^{C^T}
 \ar@<-0.4ex>[ll]_{C}
 }}
 \end{split}
\end{equation}
for complex
matrix representations  of \eqref{4.1}.

The following lemma is a special case of \cite[Theorem]{roi} or \cite[Theorem 2]{ser_izv}.

\begin{lem}\label{jyr}
Let $\mc A$ and $\mc B$ be two complex matrix representations of a bidirected graph $G$.  Then $\mc A$ and $\mc B$ are isomorphic if and only if $\underline{\mc A}$ and $\underline{\mc B}$ are isomorphic.
\end{lem}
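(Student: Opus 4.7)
The plan is to prove the two implications separately, exploiting the ``$*$-symmetry'' $\underline{\mc A}_{\alpha^*}=\underline{\mc A}_{\alpha}^T$ that is built into the construction of $\underline{\mc A}$ and $\underline{\mc B}$.

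$(\Rightarrow)$ This direction is a direct verification. Given an isomorphism $\varphi=(S_1,\dots,S_t):\mc A\ito\mc B$, I would define $\underline{\varphi}:\underline{\mc A}\ito\underline{\mc B}$ by assigning $S_i$ to each vertex $i$ and $(S_i^T)^{-1}$ to each vertex $i^*$. For a directed edge $\beta:i\to j$ of $G$ the isomorphism condition is identical on the corresponding arrow of $\underline G$. For an undirected edge $\alpha:i\lin j$, the matrix form of the bilinear-form condition reads $\mc A_\alpha=S_j^T\mc B_\alpha S_i$, which is equivalent to the linear-map equation $\underline{\mc B}_{\alpha}S_j=(S_i^T)^{-1}\underline{\mc A}_{\alpha}$ on the arrow $\alpha:j\to i^*$ of $\underline G$; the paired arrow $\alpha^*:i\to j^*$ then follows automatically by transposition. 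Bidirected edges are handled dually using the identification $\mc A_i^*\cong\cc^{n_i}$.

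$(\Leftarrow)$ This is the substantive direction and the main obstacle. An arbitrary isomorphism $\psi:\underline{\mc A}\ito\underline{\mc B}$ has independent components $U_i$ at each $i$ and $V_i$ at each $i^*$, with no \emph{a priori} relation $V_i=(U_i^T)^{-1}$, so $\psi$ need not come from an isomorphism of $\mc A$ and $\mc B$. The idea is to exploit the $*$-symmetry: transposing the matrix equations that express that $\psi$ is an isomorphism, and swapping the roles of $\alpha$ and $\alpha^*$, produces a second isomorphism $\psi^{\tau}:\underline{\mc A}\ito\underline{\mc B}$ whose components are $(V_i^T)^{-1}$ at $i$ and $(U_i^T)^{-1}$ at $i^*$. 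Therefore $\theta:=\psi^{-1}\psi^{\tau}$ is an automorphism of $\underline{\mc A}$, and a direct computation gives $\theta^{\tau}=\theta^{-1}$, i.e., $\theta\,\theta^{\tau}=\mathrm{id}$.

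The remaining step, which I expect to be the crux, is to produce an automorphism $\eta$ of $\underline{\mc A}$ satisfying $\eta^{-1}\eta^{\tau}=\theta$; then $\psi\eta$ becomes ``selfdual'', i.e., of the form $(S_i,(S_i^T)^{-1})$, and its unstarred components yield the desired isomorphism $(S_1,\dots,S_t):\mc A\ito\mc B$. This is a Hilbert~90--type vanishing statement for the involution $\tau$ acting on the automorphism group of $\underline{\mc A}$, and it is precisely where the theorem of Roiter--Sergeichuk cited after the lemma is used: one works inside the endomorphism algebra of $\underline{\mc A}$, an associative algebra on which $\tau$ induces an involution, and constructs a suitable ``square root'' of $\theta$ as an element $\eta$ of the open subset $\mathrm{Aut}(\underline{\mc A})$. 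Once $\eta$ is produced, the verification that the unstarred part of $\psi\eta$ gives an isomorphism of $\mc A$ with $\mc B$ is a routine edge-by-edge check against the three isomorphism conditions of Definition~\ref{qah}.
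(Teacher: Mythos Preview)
Your plan is correct and matches the paper's approach: the forward direction is the same direct extension $(S_i,S_i^{-T})$, and for the converse the paper likewise exploits the $*$-involution, producing from $\psi$ a second isomorphism $\psi^{\circ}$ (your $(\psi^{\tau})^{-1}$) and then correcting $\psi$ by a polynomial square root of the resulting automorphism $\psi^{\circ}\psi$, via Kaplansky's theorem that every nonsingular complex matrix has a square root which is a polynomial in it. Your Hilbert~90 formulation---finding $\eta$ with $\eta(\eta^{\tau})^{-1}=\theta$ (note your equation $\eta^{-1}\eta^{\tau}=\theta$ has a harmless inversion slip)---is precisely this square-root step rephrased.
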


\begin{proof}  For clarity, we prove Lemma \ref{jyr} for matrix representations of the bidirected graph $G$ given in \eqref{4.1}; its proof for matrix representations of an arbitrary bidirected graph is analogous.

Let $\mc B$ be obtained from $\mc A$ in \eqref{4.df} by replacing $A,B,C$ with $A',B',C'$.

$\Longrightarrow$.
Let $\varphi=(\Phi_1,\Phi_2):\mc B\ito\mc A$. Then
\begin{equation}\label{hhm}
\begin{split}
    \xymatrix@R=15pt@C=40pt{
{1}
       \ar@/^1.5pc/@{-->}%
       [rrr]^{\Phi_1}
\ar[d]_{%
\smash{\textstyle\underline{\varphi}:}\quad
 A'}\ar[rd]^(.25){B'}
&{1^*}
       \ar@/^1.5pc/@{-->}%
       [rrr]^{\Phi_{1}^{-T}}
&&
{1}
\ar[d]_{A}\ar[rd]^(0.25){B}
&{1^*}
                               \\
{2}
             \ar@/_1.5pc/@{-->}%
             [rrr]_{\Phi_2}
\ar[ru]^(.75){B^{\prime T}}&
{2^*}
 \ar@<0.4ex>[l]^(0.25){C^{\prime T}}
 \ar@<-0.4ex>[l]_{C'}
             \ar@/_1.5pc/@{-->}%
             [rrr]_{\Phi_{2}^{-T}}
\ar[u]_{A^{\prime T}}&&
{2}\ar[ru]^(0.75){B^T}&
{2^*}\ar[u]_{A^T}
 \ar@<0.4ex>[l]^(0.65){C^{T}}
 \ar@<-0.4ex>[l]_{C}
}
\end{split}
\end{equation}
is the isomorphism
$
\underline{\varphi}=(\Phi_1,\Phi_2,
\Phi_{1^*},\Phi_{2^*}):=
(\Phi_1,\Phi_2,
\Phi_{1}^{-T},\Phi_{2}^{-T})
:\underline{\mc B}\ito \underline{\mc A}.
$

$\Longleftarrow$.
Let
\begin{equation}\label{kib}
\psi=(\Psi_1,\Psi_2,\Psi_{1^*}, \Psi_{2^*})=(P,Q,R,S):\un{\mc B}\ito\un{\mc A}\,,
\end{equation}
Then $\psi^{\circ}:=(R^T,S^T,P^T,Q^T):\un{\mc A}\ito\un{\mc B}$ and \[\psi^{\circ}\psi= (R^TP,S^TQ,P^TR,Q^TS):\un{\mc B}\ito\un{\mc B}.\] Take a nonzero polynomial $f\in\cc[x]$ and consider the morphisms of quiver representations
\begin{align*}
f(\psi^{\circ}\psi)&:=
(f(R^TP),f(S^TQ),f(P^TR),f(Q^TS)):\un{\mc B}\to\un{\mc B},\\
\psi^{-\circ}f(\psi^{\circ}\psi)&:=
(R^{-T}f(R^TP),S^{-T}f(S^TQ),\\&\hspace{2cm} P^{-T}f(P^TR),Q^{-T}f(Q^TS)):\un{\mc B}\to\un{\mc A}.
\end{align*}
We must choose the polynomial $f$ such that $\psi^{-\circ}f(\psi^{\circ}\psi)$ has the form \eqref{hhm}; that is, \begin{equation}\label{ffc}
\begin{split}
(R^{-T}f(R^TP))^{-T}=P^{-T}f(P^TR),
 \\
 (S^{-T}f(S^TQ))^{-T}=Q^{-T}f(Q^TS).
\end{split}
 \end{equation}
The first equality is equivalent to $ Rf(P^TR)^{-1}=P^{-T}f(P^TR)$, and so the equalities \eqref{ffc} are equivalent to $P^TR=f(P^TR)^2,\
 Q^TS=f(Q^TS)^2$, which are equivalent to
\begin{equation*}\label{fmc}
 P^TR\oplus Q^TS=f(P^TR\oplus Q^TS)^2.
\end{equation*}

Such an $f$ exists since for each nonsingular complex matrix $M$ there is a polynomial in $M$ whose square is $M$; see Kaplansky \cite[Theorem 68]{kapl}. We obtain $\psi^{-\circ}f(\psi^{\circ}\psi)$ of the form \eqref{hhm}; its first two matrices define the isomorphism
\begin{equation}\label{jhe}
(R^{-T}f(R^TP),S^{-T}f(S^TQ)): {\mc B}\ito {\mc A},
\end{equation}
which proves Lemma \ref{jyr}.
\end{proof}

\begin{proof}[Proof of Theorem \ref{gjs}] For clarity, we prove Theorem \ref{gjs} for matrix representations of the bidirected graph $G$ in \eqref{4.1}.

Let $\mc A$ be a complex matrix representation of $G$.
We must prove that there exist positive numbers
$\varepsilon$ and $K$ with the following property: for every matrix representation $\mc B$ that is isomorphic to $\mc A$ and satisfies
$
\|\mc B -\mc A\| < \varepsilon$
there exists an isomorphism
$\varphi =(\Phi _1,\Phi _2):\mc B\ito\mc A$ such that
\[
\|\Phi_1-I\|+\|\Phi_2-I\| \le K\|\mc B - \mc A\|.
\]

Let $\mc A'$ be any matrix representation of $G$ that is isomorphic to $\mc A$.
Then $\un{\mc A}$ and $\un{\mc A'}$ are isomorphic representations of $\underline G$ and
\begin{equation*}\label{vcm}
\|\un{\mc A}'-\un{\mc A}\|=2\|{\mc A}'-{\mc A}\|.
\end{equation*}
By Theorem \ref{xgh},
there exists $K>0$ (the same for all $\mc A'$) and an isomorphism  \eqref{kib} satisfying
\[
\delta :=\|P-I\|+
\|Q-I\|+
\|R-I\|+
\|S-I\|
\le K\|\un{\mc A}'-\un{\mc A}\|.
\]
Let us prove that \eqref{jhe} is a desired isomorphism; that is, there exists $M>0$ (the same for all $\mc A'$ that are sufficiently close to $\mc A$) such that
\begin{equation}\label{nhi}
\|R^{-T}f(R^TP)-I\|\le M\delta ,\quad
\|S^{-T}f(S^TQ)-I\|\le M\delta.
\end{equation}

Let us find $M$ for the first inequality:

(i)
Write $R=I+\Delta$ and $R^{-1}=I+\nabla$, in which
$\Delta$ and $\nabla$ are sufficiently small matrices.
Then
\begin{gather*}
I=RR^{-1}=(I+\Delta)(I+\nabla)
=I+\Delta+\nabla+\Delta\nabla,\\ \Delta+\nabla+\Delta\nabla=0,\ \ \text{and}\ \
\|\nabla\|=\|\Delta+\Delta\nabla\|
\le \|\Delta\|+\|\Delta\nabla\|.
\end{gather*}
Since $\|\Delta\nabla\|\le \|\Delta\|\|\nabla\|\le \|\Delta\|$ for a sufficiently small $\nabla$, we have $\|\nabla\|\le 2\|\Delta\|$.
Hence
\begin{equation*}\label{rru}
\|R^{-T}-I\|=
\|R^{-1}-I\|=\|\nabla\|\le 2\|\Delta\|\le 2\|R-I\|\le 2\delta .
\end{equation*}

(ii)
For each $X,Y\in\cc^{n\times n}$, we have
\begin{equation}\label{kjs}
XY-I=(X-I)(Y-I)+(X-I)+(Y-I).
\end{equation}
Taking $X=R^T$ and $Y=P$, we get
$\|R^TP-I\|\le \delta^2+2\delta\le 3\delta$ for a sufficiently small $\delta$.

(iii)
Write $f(R^TP)=I+D$, in which $D$ is a sufficiently small matrix. Since $f(R^TP)^2=R^TP$, we have
$R^TP=(I+D)^2=I+2D +D^2$, and so
\begin{align*}
 \|R^TP-I\|&=\|2D +D^2\|\ge 2\|D\| -\|D^2\|\\&\ge 2\|D\| -\|D\|^2 \ge \|D\|+(\|D\| -\|D\|^2)\ge \|D\|.
\end{align*}
Hence, $\|f(R^TP)-I\| =\|D\|\le \|R^TP-I\|\le 3\delta$.

By \eqref{kjs}, (i), and (iii),
\begin{align*}
&\|R^{-T}f(R^TP)-I\|\le \|R^{-T}-I\|\|f(R^TP)-I\|
+\|R^{-T}-I|
\\&\qquad+\|f(R^TP)-I\|\le 2\delta 3\delta +2\delta +3\delta
\le 6\delta +2\delta +3\delta=11\delta.
\end{align*}
Therefore, we can
take $M=11$ in the first inequality of \eqref{nhi}. Analogously, we can take  $M=11$ in the second inequality.
\end{proof}

\end{document}